\newtheorem{theorem}{Theorem}[section]
\newtheorem{corollary}[theorem]{Corollary}
\theoremstyle{definition}
\theoremstyle{remark}
\numberwithin{equation}{section}
\begin{document}

\title[n-Jordan homomorphisms]{n-Jordan homomorphisms }
\author[M. Eshaghi Gordji]{M. Eshaghi Gordji }
\address{Department of Mathematics, Semnan University, P. O. Box 35195-363, Semnan, Iran}
\email{madjid.eshaghi@gmail.com}

 \subjclass[2000]{Primary 47B48;
Secondary 46L05, 46H25.}

\keywords{ Jordan homomorphism; n-homomorphism; Banach algebra.}

\begin{abstract}
Let $n\in \Bbb N,$  and let  $A,B$ be two rings. An additive map $h:
A\to B$ is called n-Jordan homomorphism if $h(a^n)=(h(a))^n$ for all
$a \in {A}$. Every Jordan homomorphism is an n-Jordan homomorphism,
for all $n\geq 2,$ but the converse is false, in general. In this
paper we investigate the n-Jordan homomorphisms  on Banach algebras.
Indeed some results related to continuity  are given as well.
\end{abstract}
\maketitle


\section{Introduction and preliminaries}

Let $A,B$ be two rings (algebras). An additive  map $h: A\to B$ is
called n-Jordan homomorphism (n-ring homomorphism) if
$h(a^n)=(h(a))^n$ for all $a \in {A},$ (
$h(\Pi^n_{i=1}a_i)=\Pi^n_{i=1}h(a_i),$ for all $a_1,a_2, \cdots,a_n
\in {A}$). If  $h: A\to B$ is a linear n-ring homomorphism, we say
that $h$ is n-homomorphism. The concept of n-homomorphisms was
studied for complex algebras by Hejazian, Mirzavaziri, and Moslehian
[3] (see also [1] and
 [7]). A 2-Jordan homomorphism is a Jordan homomorphism, in the
 usual  sense, between rings. Every Jordan homomorphism is an n-Jordan homomorphism,
for all $n\geq 2,$ (see for example Lemma 6.3.2 of [6]), but the
converse is false, in general. For instance, let $A$ be an algebra
over $\Bbb C$ and let $h:A \to A$ be a non-zero Jordan homomorphism
on $A$. Then $-h$ is a 3-Jordan homomorphism. It is easy to check
that $-h$ is not 2-Jordan homomorphism or 4-Jordan homomorphism. The
study of ring homomorphisms between Banach algebras $A$ and $B$ is
of interest even if $A=B=\Bbb C.$ For example the zero mapping, the
identity and the complex conjugate are ring homomorphisms on $\Bbb
C$, which are all continuous. On the other hand the existence of a
discontinuous ring homomorphism on $\Bbb C$ is well-known. More
explicitly, if $G$ is the set of all surjective ring homomorphisms
on $\Bbb C,$ then $Card(G)=2^{Card(\Bbb C)}.$ In fact, Charnow
[2;Theorem 3]   proved that there exist $2^{Card(\Bbb C)}$
automorphisms for every algebraically closed fild $K$. It is also
known that if $A$ is a uniform algebra on a compact metric space,
then there are exactly $2^{Card(\Bbb C)}$  complex-valued ring
homomorphisms on $A$ whose kernels are non-maximal prime ideals (see
[4; Corollary 2.4]). As an example, take
\[{\mathcal A} := \left[ \begin{array}{cccc}
{0} & {\Bbb R} & {\Bbb R} & {\Bbb R}\\
{0} & {0} & {\Bbb R} & {\Bbb R}\\
{0} & {0} & {0} & {\Bbb R}\\
{0} & {0} & {0} & {0}\\
 \end{array} \right] \]
then $\mathcal A$ is an  algebra equipped with the usual matrix-like
operations. It is easy to see that $$\mathcal A^3\neq 0=\mathcal
A^4.$$ So any additive map from $\mathcal A$ into itself is a
4-Jordan homomorphism, but its kernel does not need to be an ideal
of $\mathcal A.$ Let now $\mathcal B$ be the algebra of  all
$\mathcal A-$valued continuous functions from $[0,1]$ into $\mathcal
A$ with sup-norm. Then $\mathcal B$ is an infinite dimension Banach
algebra, also product of any four elements of $\mathcal B$ is 0.
Since $\mathcal B$ is infinite dimension, there are linear
discontinuous maps which are 4-Jordan homomorphisms from $\mathcal
B$ into itself (see [3]). In this paper we study the continuity of
linear n-Jordan homomorphisms on $C^*-$algebras.

\section{Main result}

By definition, it is obvious that n-ring homomorphisms are n-Jordan
homomorphisms. Conversely, under a certain condition, n-Jordan
homomorphisms are ring homomorphisms. For example, each Jordan
homomorphism $h$ from a commutative Banach algebra $A$ into $\Bbb C$
is a ring homomorphism: Fix $a,b\in A$ arbitrarily. Since
$h((a+b)^2)=(h(a+b)^2,$ a simple calculation shows that
$h(ab+ba)=2h(a)h(b).$ The commutativity of $A$ implies
$h(ab)=h(a)h(b),$ and hence $h$ is a ring homomorphism. In 1968,
Zelazko [8] proved the following Theorem (see also Theorem 1.1 of
[5]).

\begin{theorem}
Suppose $A$ is a Banach algebra,  which need not to be commutative,
and suppose $B$ is a semisimple commutative Banach algebra. Then
each Jordan homomorphism $h:\to  B$ is a ring homomorphism.
\end{theorem}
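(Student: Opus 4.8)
The plan is to reduce the problem to scalar-valued functionals via Gelfand theory and then run a Gleason--Kahane--Żelazko type argument. Since $B$ is commutative and semisimple, the Gelfand transform is injective, so $\bigcap_{\varphi}\ker\varphi=\{0\}$, the intersection running over all characters $\varphi$ of $B$. Consequently, to prove $h(ab)=h(a)h(b)$ it suffices to show $\varphi(h(ab))=\varphi(h(a))\varphi(h(b))$ for every such $\varphi$. As $\varphi$ is multiplicative and $B$ is commutative, putting $f:=\varphi\circ h$ we have $f(a^2)=\varphi(h(a)^2)=\varphi(h(a))^2=f(a)^2$, so $f\colon A\to\mathbb{C}$ is itself a Jordan homomorphism, and the goal collapses to the classical scalar statement: \emph{every Jordan functional on a complex Banach algebra is multiplicative.}

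To prepare this, I would adjoin an identity, forming $A^{+}=A\oplus\mathbb{C}e$ and extending $f$ by $\tilde f(a+\lambda e)=f(a)+\lambda$. A direct check gives $\tilde f((a+\lambda e)^2)=\tilde f(a+\lambda e)^2$, so $\tilde f$ is again a Jordan functional with $\tilde f(e)=1$, and it is harmless to assume $A$ unital and $f(e)=1$. Linearizing $f(x^2)=f(x)^2$ yields $f(xy+yx)=2f(x)f(y)$, and a short elementary manipulation (computing $f\big((ab+ba)a+a(ab+ba)\big)$ two ways) upgrades this to the Jordan identity $f(aba)=f(a)^2f(b)$ for all $a,b$. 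The next step is the key non-vanishing observation: if $a$ is invertible and $f(a)=0$, then $f(aba)=f(a)^2f(b)=0$ for every $b$; since $b\mapsto aba$ is a bijection of $A$, this forces $f\equiv 0$, contradicting $f(e)=1$. Hence $f$ does not vanish on invertible elements, and applying this to $a-f(a)e$ gives $f(a)\in\sigma(a)$ for all $a$, whence $|f(a)|\le r(a)\le\|a\|$; in particular $f$ is automatically continuous with $\|f\|\le 1$.

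The analytic finish, which I expect to be the main obstacle, proceeds as follows. Fix $a,b$ and set $G(\lambda,\mu)=f\!\big(\exp(\lambda a)\exp(\mu b)\big)$. Because $\exp(\lambda a)\exp(\mu b)$ is invertible, $G$ never vanishes, and boundedness of $f$ gives $|G(\lambda,\mu)|\le\exp\!\big(|\lambda|\,\|a\|+|\mu|\,\|b\|\big)$, so $G$ is entire of order one in each variable. Using $f(x^n)=f(x)^n$ (available since every Jordan homomorphism is $n$-Jordan) together with continuity, one finds $G(\lambda,0)=\exp(\lambda f(a))$ and $G(0,\mu)=\exp(\mu f(b))$. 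Passing to a holomorphic logarithm of the nonvanishing $G$ and invoking a Hadamard/Liouville argument forces $\log G(\lambda,\mu)=\lambda f(a)+\mu f(b)$. The real work is precisely this step: showing that a zero-free entire function of controlled (order-one) growth has an affine logarithm.

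The conclusion is then purely formal. Comparing the coefficient of $\lambda\mu$ on both sides of $G(\lambda,\mu)=\exp(\lambda f(a)+\mu f(b))$, and noting that the $\lambda\mu$-coefficient of $\exp(\lambda a)\exp(\mu b)$ is exactly $ab$ (not its symmetrization, which is what lets the argument survive non-commutativity), gives $f(ab)=f(a)f(b)$. Restricting back from $A^{+}$ to $A$ and recalling $f=\varphi\circ h$ yields $\varphi(h(ab))=\varphi(h(a))\varphi(h(b))$ for all characters $\varphi$; semisimplicity of $B$ then delivers $h(ab)=h(a)h(b)$, so $h$ is a ring homomorphism. One may instead quote the scalar case directly as the Gleason--Kahane--Żelazko theorem, but I prefer the self-contained route above, where the only genuinely hard ingredient is the complex-analytic factorization in the final paragraph.
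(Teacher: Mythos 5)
The paper offers no proof of this statement at all---it is simply quoted as Zelazko's theorem with a pointer to [8]---so there is nothing internal to compare against, and I will judge your argument on its own merits. It is correct, and it is essentially a reconstruction of the classical proof. Your opening reduction (compose with each character $\varphi$ of $B$ and use semisimplicity, i.e. $\bigcap_{\varphi}\ker\varphi=\{0\}$, to pass from $\varphi(h(ab))=\varphi(h(a))\varphi(h(b))$ back to $h(ab)=h(a)h(b)$) is precisely what the paper itself does in Step II of its proof of the $3$-Jordan analogue (its Theorem 2.5), and the scalar statement you are left with is the Gleason--Kahane--Zelazko theorem. Your intermediate steps all check out: the unitization, the polarization $f(xy+yx)=2f(x)f(y)$, the identity $f(aba)=f(a)^2f(b)$ extracted from $f\bigl((ab+ba)a+a(ab+ba)\bigr)$, non-vanishing on invertibles via the bijection $b\mapsto aba$, hence $f(a)\in\sigma(a)$ and $\|f\|\le 1$. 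The step you flag as the main obstacle is genuine but standard: for fixed $\mu$ the function $\lambda\mapsto G(\lambda,\mu)$ is zero-free, entire, of order at most one, so Hadamard's factorization gives $G(\lambda,\mu)=\exp(\alpha(\mu)+\beta(\mu)\lambda)$; reading off $\alpha(\mu)=\mu f(b)$ from $G(0,\mu)$, checking from the growth bound that $\beta$ is entire with $|\beta(\mu)|\le\|a\|$, and applying Liouville gives $\beta\equiv\beta(0)=f(a)$. The one point you must make explicit is linearity: the paper's definition of Jordan homomorphism demands only additivity, but your argument (already at $f(f(a)e)=f(a)$, and again when evaluating $f$ termwise on $\exp(\lambda a)$) needs $f=\varphi\circ h$ to be $\mathbb{C}$-linear, so you should assume $h$ linear, as in Zelazko's original formulation. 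Finally, if you wish to avoid two-variable entire functions, note that the one-variable Hadamard argument applied to $f(\exp(\lambda x))$ already yields $f(x^2)=0$ for $x\in\ker f$, and then the identity $(ab+ba)^2-(ab-ba)^2=2(ab^2a+ba^2b)$ together with $f(aba)=f(a)^2f(b)$ forces $f(ab-ba)=0$ for $a\in\ker f$; combined with $f(ab+ba)=0$ this gives multiplicativity and is closer to Zelazko's own route.
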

We prove the following result for 3-Jordan homomorphisms and
4-Jordan homomorphisms on commutative algebras.

\begin{theorem}
Let  $n\in \{3,4\}$ be fixed,  $ A, B$ be two commutative algebras,
and let $h: A\to B$ be a n-Jordan homomorphisms. Then $h$ is n-ring
homomorphism.
\end{theorem}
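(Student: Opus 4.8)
The plan is to recover the multilinear (``$n$-ring'') identity from the single-variable relation $h(a^n)=h(a)^n$ by polarization, exploiting the commutativity of both $A$ and $B$ together with the additivity of $h$. Throughout I would work over a field of characteristic zero (as is the case for the complex or real algebras in view), so that the multinomial coefficients appearing below are invertible and may be cancelled; additivity supplies $h(ka)=k\,h(a)$ for every integer $k$, which is the only homogeneity I shall need.

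First I would treat two variables. For $n=3$, apply $h$ to $(a\pm b)^3=a^3\pm 3a^2b+3ab^2\pm b^3$ (valid since $A$ is commutative) and compare with $h((a\pm b)^3)=(h(a)\pm h(b))^3$ expanded by the binomial theorem in the commutative algebra $B$. Writing $x=h(a),\,y=h(b)$, the two resulting identities, after cancelling $h(a^3)=x^3$ and $h(b^3)=y^3$, read $h(a^2b)+h(ab^2)=x^2y+xy^2$ and $-h(a^2b)+h(ab^2)=-x^2y+xy^2$; adding and subtracting isolates $h(a^2b)=x^2y$ and $h(ab^2)=xy^2$. For $n=4$ the same $\pm$ trick applied to $(a\pm b)^4$ yields $h(a^2b^2)=x^2y^2$ together with the combined relation $h(a^3b)+h(ab^3)=x^3y+xy^3$, but the two odd-in-$b$ terms do not separate by sign alone; here I would feed in a second substitution, e.g. $h((2a+b)^4)=(2x+y)^4$, whose $h$-image (after the already-known lower terms cancel) gives $4h(a^3b)+h(ab^3)=4x^3y+xy^3$, and subtracting the combined relation yields $h(a^3b)=x^3y$ and then $h(ab^3)=xy^3$.

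Next, for $n=4$ I must also control the mixed term of type $a^2bc$ before the top term can be reached. Applying $h$ to $(a+b+c)^4$ and matching against $(x+y+z)^4$, all monomials in at most two of the variables cancel by the previous step, leaving the symmetric relation $h(a^2bc)+h(ab^2c)+h(abc^2)=x^2yz+xy^2z+xyz^2$. To break the symmetry I would use $h((2a+b+c)^4)=(2x+y+z)^4$: after cancellation the exponent-$(2,1,1)$ terms give $2\,h(a^2bc)+h(ab^2c)+h(abc^2)=2x^2yz+xy^2z+xyz^2$, and subtracting the symmetric relation forces $h(a^2bc)=x^2yz=h(a)^2h(b)h(c)$, with the companion identities following by relabelling.

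Finally I would expand $(a_1+\cdots+a_n)^n$ and apply $h$. By commutativity of $A$ this is the multinomial sum; after applying $h$ termwise (additivity), every monomial repeating at least one variable is handled by the identities built above and matches exactly the corresponding term of $(h(a_1)+\cdots+h(a_n))^n$ in the commutative algebra $B$. Only the fully mixed monomial $a_1\cdots a_n$, carrying the coefficient $n!$, survives, giving $n!\,h(a_1\cdots a_n)=n!\,h(a_1)\cdots h(a_n)$; cancelling $n!$ yields the claim. The main obstacle is precisely the separation of these symmetric combinations of mixed terms: the naive $(a\pm b)$ polarization collapses several monomials of the same parity, so the crux is to choose enough integer-weighted substitutions (such as the coefficient $2$ above) to build a linear system whose unique solution pins down each individual mixed term. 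Verifying that the lower-order terms cancel in each such substitution is the routine but essential bookkeeping.
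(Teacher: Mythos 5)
Your proof is correct, and it rests on the same basic strategy as the paper's -- polarize the identity $h(a^n)=h(a)^n$ by substituting sums of variables and exploit additivity together with commutativity on both sides -- but the route through the mixed monomials is genuinely different. The paper never separates the individual monomial types the way you do: for $n=3$ it keeps only the symmetric relation $h(x^2y+xy^2)=h(x)^2h(y)+h(x)h(y)^2$ and extracts $h(xyz)$ in one stroke from the combination $\tfrac12\{(x+z)^2y+(x+z)y^2-(x^2y+xy^2)-(z^2y+zy^2)\}=xyz$; for $n=4$ its pivotal move is to polarize twice up to the symmetric three-variable identity $h(xyz(x+y+z))=h(x)h(y)h(z)(h(x)+h(y)+h(z))$ and then substitute $z=-x$, which collapses everything to $h(x^2y^2)=h(x)^2h(y)^2$, after which two further polarizations $y\mapsto y+w$ and $x\mapsto x+t$ finish. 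You instead pin down every monomial type ($a^3b$, $a^2b^2$, $a^2bc$, $abcd$) individually by solving small linear systems coming from integer-weighted substitutions such as $2a+b$ and $2a+b+c$, and then read off the top term of $(a_1+\cdots+a_n)^n$. Your version is longer but more systematic and makes the mechanism transparent (and in principle indicates how one would proceed for other $n$), while the paper's is shorter because the sign trick $z=-x$ bypasses the $(3,1)$ and $(2,1,1)$ types entirely. Both arguments share the same hidden hypothesis, which you state explicitly and the paper leaves implicit in its factors of $\tfrac12$ and the suppressed factor $3$ in deriving its equation (1): one must be able to cancel small integers, i.e., the algebras should be over a field of characteristic zero (true for the complex algebras the paper has in mind).
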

\begin{proof}
First let $n=3.$ Recall that $h$  is additive mapping such that
$h(a^3)=(h(a))^3$ for all $a\in A.$ Replacement of $a$ by $x+y$
results in
$$h(x^2y+xy^2)=h(x)^2h(y)+h(x)h(y)^2. \eqno (1)$$
Hence for every $x,y,z \in A$ we have
\begin{align*}
h(xyz)&=\frac{1}{2}h\{(x+z)^2y+(x+z)y^2-(x^2y+xy^2+z^2y+zy^2)\}\\
&=\frac{1}{2}\{h[(x+z)^2y+(x+z)y^2]-h[x^2y+xy^2]-h[z^2y+zy^2]\}\\
&=\frac{1}{2}\{[h(x+z)]^2h(y)+(x+z)[h(y)]^2-[h(x)]^2h(y)+h(x)[h(y)]^2\\
&-[h(z)]^2h(y)+h(z)[h(y)]^2\}\\
&=h(x)h(y)h(z).
\end{align*}
This means that $h$ is a 3-ring homomorphism. Now suppose $n=4.$
Then $h$ is additive and $h(a^4)=(h(a))^4$ for all $a\in A.$
Replacing  $a$ by $x+y$ in above equality to get
$$h(4x^3y+6x^2y^2+4xy^3)=4h(x)^3h(y)+6h(x)^2h(y)^2+4h(x)h(y)^3. \eqno (2)$$
Replacing $x$ by $x+z$ in (2),  we obtain
\begin{align*}h\{(4x^3y&+6x^2y^2+4xy^3)+(4z^3y+6z^2y^2+4zy^3)+12(x^2zy+xz^2y+xzy^2)\}\\
&=(4h(x)^3h(y)+6h(x)^2h(y)^2+4h(x)h(y)^3)+(4h(z)^3h(y)+6h(z)^2h(y)^2\\
&+4h(z)h(y)^3)+12(h(x)^2h(z)h(y)+h(x)h(z)^2h(y)+h(x)h(z)h(y)^2).
\hspace {1.8 cm} (3)
\end{align*}
Combining (2) by (3) to get
$$h\{(xyz)(x+y+z)\}=(h(x)h(y)h(z))(h(x)+h(y)+h(z)).\eqno (4)$$
Replacing $z$ by $-x$ in (4) to obtain
$$h(x^2y^2)=h(x)^2h(y)^2 \eqno (5)$$
replacing $y$ by $y+w$ in (5), we get
$$h(x^2yw)=h(x)^2h(y)h(w). \eqno(6)$$
Now replace $x$ by $x+t$ to obtain
$$h(xtyw)=h(x)h(t)h(y)h(w)$$
hence,  $h$ is 4-ring homomorphism.

\end{proof}
By Theorem 2.2 and Theorem 3.2 of [1], we conclude  the following
result.
\begin{corollary}
Let $h:A\to  B$ be a linear involution preserving 3-Jordan
homomorphism between commutative $C^*$-algebras. Then $h$ is norm
contractive ($\|h\|\leq 1$).
\end{corollary}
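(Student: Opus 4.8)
The plan is to separate the statement into two parts: first turn the $3$-Jordan condition into genuine multiplicativity using the theorem just proved, and then run a Gelfand-theoretic estimate on the target. Since $A$ and $B$ are commutative $C^*$-algebras they are, in particular, commutative algebras, so Theorem 2.2 with $n=3$ applies and shows that the additive map $h$ is a $3$-ring homomorphism; as $h$ is linear by hypothesis, it is a $3$-homomorphism, i.e. $h(abc)=h(a)h(b)h(c)$ for all $a,b,c\in A$. This is the only step that uses the $3$-Jordan and commutativity hypotheses, and it reduces the assertion to the claim that a linear, involution-preserving $3$-homomorphism between commutative $C^*$-algebras is contractive, which is exactly the situation covered by Theorem 3.2 of [1].

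To estimate $\|h\|$ I would test $h$ against the characters of $B$. Because $B$ is a commutative $C^*$-algebra, $\|b\|=\sup_{\psi}|\psi(b)|$ for every $b\in B$, the supremum ranging over the characters $\psi$ of $B$. For each such $\psi$ the composite $\tau:=\psi\circ h\colon A\to\Bbb C$ is linear and satisfies $\tau(abc)=\psi(h(a))\psi(h(b))\psi(h(c))=\tau(a)\tau(b)\tau(c)$, so it is a scalar-valued $3$-homomorphism; moreover $\tau(a^{*})=\psi(h(a^{*}))=\psi(h(a)^{*})=\overline{\psi(h(a))}=\overline{\tau(a)}$, since $\psi$ is a $*$-character and $h$ preserves the involution, so $\tau$ is itself involution-preserving. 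The key input, supplied by Theorem 3.2 of [1], is that such a $\tau$ is either zero or of the form $\tau=\omega\,\varphi$ with $\varphi$ a character of $A$ and $\omega^{2}=1$; in either case $|\omega|=1$, and since every character of a $C^*$-algebra is contractive, $|\tau(a)|=|\varphi(a)|\le\|a\|$. Taking the supremum over $\psi$ then gives $\|h(a)\|=\sup_{\psi}|\psi(h(a))|\le\|a\|$, that is $\|h\|\le 1$.

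The genuine obstacle is the classification of the scalar $3$-homomorphisms $\tau$, and with it the automatic continuity concealed inside that classification: a priori $\tau$ is only linear, so one cannot immediately invoke a Gleason--Kahane--Zelazko type theorem to identify its kernel with a maximal ideal. The involution-preserving hypothesis is what makes this tractable, forcing $\tau$ to be real on self-adjoint elements and thereby letting one reconstruct an honest character $\varphi$ that controls $\tau$. As a cross-check, once mere boundedness $\|h\|\le M$ is in hand one can bootstrap to the sharp constant directly, using that every element of a commutative $C^*$-algebra is normal: $\|h(a)\|^{3^{k}}=\|h(a)^{3^{k}}\|=\|h(a^{3^{k}})\|\le M\|a^{3^{k}}\|=M\|a\|^{3^{k}}$, so that $\|h(a)\|\le M^{3^{-k}}\|a\|\to\|a\|$ as $k\to\infty$. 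This confirms the estimate but still presupposes the continuity extracted from Theorem 3.2 of [1], so the real work remains hidden in that citation rather than in the present deduction.
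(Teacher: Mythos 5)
Your proposal follows exactly the paper's route: the corollary is obtained by applying Theorem 2.2 (with $n=3$) to upgrade the $3$-Jordan homomorphism to a $3$-ring homomorphism, and then invoking Theorem 3.2 of [1] for the contractivity of involution-preserving $3$-homomorphisms between commutative $C^*$-algebras. Your additional Gelfand-theoretic sketch of what lies inside that citation is a reasonable (and correctly flagged as speculative) elaboration, but the deduction itself coincides with the paper's one-line proof.
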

Also by above Theorem and Theorem 2.3 of [7], we have the following.
\begin{corollary}
Let $h:A\to  B$ be a linear involution preserving 4-Jordan
homomorphism between commutative $C^*$-algebras, then $h$ is
completely positive. Thus $h$ is bounded.
\end{corollary}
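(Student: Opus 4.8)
The plan is to combine Theorem 2.2 with the positivity machinery for $n$-homomorphisms on $C^*$-algebras recorded as Theorem 2.3 of [7]. Since $A$ and $B$ are commutative and $h$ is a linear $4$-Jordan homomorphism, Theorem 2.2 immediately upgrades $h$ to a $4$-ring homomorphism; being linear, $h$ is therefore a $4$-homomorphism, so that $h(a_1a_2a_3a_4)=h(a_1)h(a_2)h(a_3)h(a_4)$ for all $a_i\in A$ and, in particular, $h(a^4)=h(a)^4$. The hypotheses of the cited theorem are thus met, and its conclusion (complete positivity, hence boundedness) applies.

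To see why the underlying mechanism works, the first substantive step I would carry out is to verify that $h$ is positive. Given a positive element $b\in A$, the continuous functional calculus in the commutative $C^*$-algebra $A$ supplies a positive fourth root, i.e.\ a self-adjoint $a\in A$ with $a=a^*$ and $a^4=b$ (the function $t\mapsto t^{1/4}$ is continuous on $[0,\infty)$, so this works even in the non-unital case). Because $h$ preserves the involution, $h(a)^*=h(a^*)=h(a)$, so $h(a)$ is self-adjoint in $B$. Using that $h$ is a $4$-homomorphism,
$$h(b)=h(a^4)=h(a)^4=\bigl(h(a)^2\bigr)^*\bigl(h(a)^2\bigr)\geq 0,$$
since $x^*x\geq 0$ for every $x\in B$ and $\bigl(h(a)^2\bigr)^*=h(a)^2$ by self-adjointness. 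Hence $h$ maps positive elements to positive elements.

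The second step is to promote positivity to complete positivity. This is the point where commutativity is essential and where Theorem 2.3 of [7] does the real work: a positive linear map whose domain or range is a commutative $C^*$-algebra is automatically completely positive, the matrix amplifications $h\otimes\mathrm{id}_{M_k}$ remaining positive because positivity and complete positivity coincide in the commutative setting. Finally, complete positivity forces boundedness, as every positive linear map between $C^*$-algebras is continuous.

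The main obstacle I anticipate is precisely this passage from positivity to complete positivity: extracting positivity of $h$ from the involution-preserving $4$-homomorphism property is a short functional-calculus computation, but the upgrade to complete positivity genuinely relies on the commutative structure and is the content one must import from [7]. A secondary technical point is the existence of the positive fourth root together with the handling of non-unital algebras, but both are routine consequences of the continuous functional calculus and do not present a real difficulty.
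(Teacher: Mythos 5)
Your proposal matches the paper's argument exactly: the paper proves this corollary by the same two-step citation, using Theorem 2.2 to upgrade the linear $4$-Jordan homomorphism to a $4$-homomorphism between commutative $C^*$-algebras and then invoking Theorem 2.3 of [7] to obtain complete positivity and hence boundedness. The additional functional-calculus verification of positivity you sketch is supplementary detail that the paper leaves entirely to the cited reference.
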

Now we prove our main Theorem.
\begin{theorem}
Suppose $A$ is a Banach algebra,  which need not to be commutative,
and suppose $B$ is a semisimple commutative Banach algebra. Then
each 3-Jordan homomorphism $h:A\to  B$ is a 3-ring homomorphism.
\end{theorem}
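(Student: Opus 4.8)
The plan is to peel off the commutativity and semisimplicity hypotheses by the standard Gelfand reduction, and then to replace the algebraic linearization of Theorem~2.2 by the analytic machinery that stands behind Zelazko's own results. Since $B$ is semisimple and commutative, its radical is trivial, so $\bigcap_{\varphi}\ker\varphi=\{0\}$ over all characters $\varphi$ of $B$; hence $h(abc)=h(a)h(b)h(c)$ in $B$ will follow once $\varphi(h(abc))=\varphi(h(a))\varphi(h(b))\varphi(h(c))$ for every such $\varphi$. Setting $\psi:=\varphi\circ h:A\to\Bbb C$, this is exactly the assertion that $\psi$ is a $3$-ring homomorphism, and $\psi$ is itself a $3$-Jordan homomorphism since $\psi(a^3)=\varphi(h(a)^3)=\varphi(h(a))^3=\psi(a)^3$. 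Thus it suffices to prove that every $3$-Jordan homomorphism $\psi:A\to\Bbb C$ is multiplicative, now with no commutativity assumed on $A$.

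First I would localize to commutative subalgebras, where Theorem~2.2 is available. For $x\in A$ the closed subalgebra generated by $x$, the identity, and the resolvents $(x-\lambda1)^{-1}$ is commutative, and $\psi$ restricts to a $3$-Jordan homomorphism on it; by Theorem~2.2 this restriction is a $3$-ring homomorphism. I would use this to establish the spectral containment $\psi(x)\in\sigma(x)$. Normalizing to the unital case with $\psi(1)=1$ (the case $\psi(1)=0$ forces $\psi\equiv0$, while $\psi(1)=-1$ reduces to $\psi(1)=1$ by replacing $\psi$ with $-\psi$, which is again a $3$-Jordan homomorphism), fix $\lambda\notin\sigma(x)$ and put $y=(x-\lambda1)^{-1}$. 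As $x-\lambda1$, $y$ and $1$ commute and satisfy $(x-\lambda1)\,y\,1=1$, the $3$-ring property yields $1=\psi(x-\lambda1)\psi(y)$, so $\psi(x-\lambda1)=\psi(x)-\lambda\neq0$; hence $\psi(x)\in\sigma(x)$.

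With $\psi$ a unital linear functional satisfying $\psi(x)\in\sigma(x)$ for all $x$ (equivalently $\psi(x)\neq0$ for invertible $x$), the Gleason--Kahane--Zelazko theorem makes $\psi$ a character, and a character is multiplicative on arbitrary products, so $\psi(abc)=\psi(a)\psi(b)\psi(c)$ as required. This is where the noncommutativity of $A$ is absorbed. A direct imitation of Theorem~2.2 cannot succeed: polarizing $\psi(a^3)=\psi(a)^3$ in three variables only delivers the symmetrized identity $\psi(xyz+xzy+yxz+yzx+zxy+zyx)=6\psi(x)\psi(y)\psi(z)$, which by itself cannot separate $xyz$ from its permutations. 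The analytic detour through the spectrum is precisely what bypasses this.

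I expect the main obstacle to be the spectral step $\psi(x)\in\sigma(x)$ together with the normalization. Two points need care. First, the computation uses $\psi(\lambda1)=\lambda\psi(1)$, i.e.\ homogeneity, so one must either assume $\psi$ linear (as the paper's framework suggests) or first promote additivity to linearity. Second, and more seriously, unlike the ordinary Jordan case behind Theorem~2.1 the cube identity does not survive adjunction of a unit: expanding $(a+\lambda1)^3$ creates a term $\psi(a^2)$ that a $3$-Jordan homomorphism does not control, so for non-unital $A$ the unitization must be replaced by a quasi-inverse version of the above spectral computation performed inside commutative subalgebras. Once this normalization is handled and the three cases $\psi(1)\in\{0,1,-1\}$ are seen to be exhaustive, the remainder is the soft Gelfand reduction together with the citations of Theorem~2.2 and of Gleason--Kahane--Zelazko.
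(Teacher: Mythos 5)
Your Step II is exactly the paper's Step II: compose $h$ with each character $f\in M_B$, apply the scalar case to $h_f=f\circ h$, and use semisimplicity ($\bigcap_{f\in M_B}\ker f=\{0\}$) to conclude $h(abc)=h(a)h(b)h(c)$. Where you part company with the paper is the scalar case. The paper does \emph{not} go through spectra or Gleason--Kahane--Zelazko; its Step I is a purely algebraic chain of polarizations of $h(a^3)=h(a)^3$ (equations (7)--(18)) ending in $h(yxz)=h(y)h(x)h(z)$. Your objection to that route --- that polarizing the cube identity over a noncommutative $A$ can only ever yield the fully symmetrized identity $h\bigl(\sum_{\sigma\in S_3}x_{\sigma(1)}x_{\sigma(2)}x_{\sigma(3)}\bigr)=6h(x)h(y)h(z)$ --- is well taken, and it in fact pinpoints a real defect in the paper's argument: in passing from (9) to (10) the paper expands $x(y-z)^2+(y-z)^2x+(y-z)x(y-z)$ and collects $-xyz-xzy$ as $-2xyz$ and $-yzx-zyx$ as $-2yzx$, which presupposes commutativity of $A$ --- precisely the hypothesis the theorem is supposed to dispense with. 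Done correctly, subtracting (9) from (10) returns only the symmetric identity, so (11) and everything derived from it ((12)--(18)) is unsupported. Your analytic detour (restrict to the commutative closed subalgebra generated by $x$, $1$ and the resolvents, apply Theorem 2.2 there to get $\psi(x-\lambda 1)\psi((x-\lambda 1)^{-1})=1$, deduce $\psi(x)\in\sigma(x)$, then invoke Zelazko's characterization of multiplicative functionals --- reference [8] of the paper itself) is therefore not just a stylistic variant: it is the kind of argument that can actually absorb the noncommutativity of $A$.

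That said, your sketch does not yet prove the theorem \emph{as stated}, and the two gaps you flag are genuine rather than cosmetic. The theorem is asserted for additive $h$ on a Banach algebra $A$ that need not be unital, whereas both the resolvent computation (which needs $\psi(\lambda 1)=\lambda\psi(1)$) and Gleason--Kahane--Zelazko require complex-linearity, and the whole spectral setup requires a unit; additivity yields only $\Bbb Q$-homogeneity, and, as you correctly observe, the identity $h(a^3)=h(a)^3$ does not survive adjoining a unit because $(a+\lambda 1)^3$ produces the uncontrolled term $h(a^2)$. Your normalization step is fine as far as it goes ($\psi(1)=\psi(1)^3$ gives $\psi(1)\in\{0,\pm 1\}$; expanding $\psi((a+t1)^3)$ for scalar $t$ shows $\psi(1)=0$ forces $\psi=0$; and $-\psi$ is again a 3-Jordan functional), but the quasi-inverse replacement you gesture at for non-unital $A$ is left entirely unexecuted. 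So what you have is a credible proof for linear 3-Jordan functionals on unital Banach algebras, together with a correct diagnosis of why the paper's own Step I does not establish the more general claim; to match the stated theorem you would still need to handle the non-unital, merely additive case.
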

\begin{proof}
We prove the Theorem in two steps as follows. \\
STEP I. Suppose $B=\Bbb C.$  We have $h(a^3)=h(a)^3$ for all $a\in
A$. Replace $a$ by $x+y$ to obtain
$$h(xyx+yx^2+y^2x+x^2y+xy^2+yxy)=3(h(x)^2h(y)+h(x)h(y)^2) \eqno (7)$$
replace $y$ by $-y$ in (7) to get
$$h(-xyx-yx^2+y^2x-x^2y+xy^2+yxy)=3(-h(x)^2h(y)+h(x)h(y)^2). \eqno (8)$$
By (7), (8), we obtain the relation
$$h(xy^2+y^2x+yxy)=3(h(x)h(y)^2). \eqno (9)$$
Replacing $y$ by $y-z$ in (9), we get
\begin{align*}
h(xy^2&+xz^2-2xyz+yxy-yxz-zxy+zxz+z^2x+y^2x-2yzx)\\
&=3(h(x)^2h(y)+h(x)h(y)^2)-6h(x)h(y)h(z).\hspace{6 cm} (10)
\end{align*}
By (9) and (10), we obtain
$$h(yxz+zxy+2xyz+2yzx)=6h(x)h(y)h(z) \eqno (11)$$
replacing $z$ by $x$ in (11) to get
$$h(3yx^2+x^2y+2xyx=6h(x)^2h(y)\eqno (12)$$
combining (9) and (12) to obtain
$$h(xyx+2yx^2)=3h(x)^2h(y).\eqno (13)$$
By (8) and (13), we conclude that
$$h(yx^2-x^2y)=0. \eqno (14)$$
Replacing $x$ by $x+z$ in (14) to get
$$h(yx^2+yz^2+2yxz-x^2y-z^2y-2xzy)=0$$
by above equality and (14) it follows that
$$h(yxz-xzy)=0. \eqno (15)$$
Combining (11) and (15), we obtain
$$h(yxz+3xyz+2yzx)=6h(x)h(y)h(z) \eqno (16)$$
replace $z$ by $x$ in (16), we get
$$h(xyx+yx^2)=2h(x)^2h(y)  \eqno(17)$$
combining (13) and (17) to obtain
$$h(yx^2)=h(y)h(x)^2\eqno (18)$$
replace $x$ by $x+z$ in (18), we conclude that
$$h(yxz)=h(y)h(x)h(z)$$
hence $h$ is 3-ring homomorphism.\\
STEP II. $B$ is arbitrary semisimple and commutative. Let $M_B$ be
the maximal ideal space of $B$. We associate to each $f\in M_B$ a
function $h_f:A \to \Bbb C$ defined by
$$h_f(a):=f(h(a))$$
for all $a\in A.$ It is easy to see that $h_f$ is additive and
$h_f(a^3)=(h_f(a))^3$ for all $a\in A.$ So  STEP I applied to $h_f$,
implies that $h_f$ is a 3-ring homomorphism. By the definition of
$h_f$, we obtain that
$$f(h(abc))=f(h(a))f(h(b))f(h(c))=f(h(a)h(b)h(c)).$$
Hence $$h(abc)-h(a)h(b)h(c)\in Ker(f)$$ for all $a,b,c\in A$ and all
$f\in M_B.$  Since $B$ is assumed to be semisimple, we get
$h(abc)=h(a)h(b)h(c)$ for all $a,b,c \in A.$ We thus conclude that
$h$ is a 3-ring homomorphism, and the proof is complete.
\end{proof}

From now on we consider such  $n-$Jordan homomorphisms that are
linear.
\begin{corollary}
Suppose $A, B$ are $C^*-$algebras,  which $A$ need not to be
commutative, and suppose $B$ is  semisimple and commutative. Then
every  involution preserving  3-Jordan homomorphism $h:A\to  B$ is
norm contractive ($\|h\|\leq 1$).
\end{corollary}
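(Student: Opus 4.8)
The plan is to reduce the non-commutative domain $A$ to the scalar-valued, self-adjoint situation already covered by Corollary 2.3. First I would invoke Theorem 2.5 to record that $h$ is in fact a 3-ring homomorphism, $h(abc)=h(a)h(b)h(c)$; a $C^*$-algebra is a Banach algebra and a commutative $C^*$-algebra is semisimple, so the hypotheses of that theorem are met. The decisive move is then to Gelfand-transform the target: since $B$ is a commutative $C^*$-algebra, its Gelfand representation is an isometric $*$-isomorphism onto $C_0(M_B)$, so that $\|h(a)\|=\sup_{f\in M_B}|f(h(a))|$ for every $a$. Writing $h_f:=f\circ h:A\to\Bbb C$, it therefore suffices to prove $\|h_f\|\le 1$ for each character $f$ of $B$.

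Next I would check that each $h_f$ inherits the right structure. It is linear (composition of the linear $h$ with $f$), it satisfies $h_f(a^3)=f(h(a^3))=f(h(a)^3)=h_f(a)^3$ because $f$ is multiplicative, and it is Hermitian: $h_f(a^*)=f(h(a)^*)=\overline{f(h(a))}=\overline{h_f(a)}$, using that $h$ preserves the involution and that $f(b^*)=\overline{f(b)}$ on a commutative $C^*$-algebra. For a self-adjoint $a$, the $C^*$-subalgebra $C^*(a)\subseteq A$ generated by $a$ is commutative, and the restriction $h_f|_{C^*(a)}:C^*(a)\to\Bbb C$ is a linear, involution-preserving 3-Jordan homomorphism between commutative $C^*$-algebras. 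Corollary 2.3 then yields $\|h_f|_{C^*(a)}\|\le1$, hence $|h_f(a)|\le\|a\|$ for every self-adjoint $a\in A$.

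Finally I would pass from self-adjoint elements to arbitrary ones by a phase-rotation argument, exploiting that $h_f$ is Hermitian. Given $a$ with $h_f(a)\ne0$, put $b=e^{-i\theta}a$ with $\theta=\arg h_f(a)$, so that $h_f(b)=|h_f(a)|$ is real and $\|b\|=\|a\|$. Taking the self-adjoint part $x=\tfrac12(b+b^*)$ one finds $h_f(x)=\operatorname{Re}h_f(b)=|h_f(a)|$ while $\|x\|\le\|b\|=\|a\|$; the self-adjoint bound above gives $|h_f(a)|=|h_f(x)|\le\|x\|\le\|a\|$. Thus $\|h_f\|\le1$ for all $f\in M_B$, and taking the supremum over $M_B$ gives $\|h(a)\|\le\|a\|$, i.e.\ $\|h\|\le1$.

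The step I expect to be the real obstacle is exactly this last one. Corollary 2.3 only sees commutative domains, so after reducing to $h_f$ the self-adjoint estimate is immediate, but controlling $h_f$ on non-self-adjoint elements is where the non-commutativity of $A$ bites: the crude decomposition $a=\operatorname{Re}a+i\operatorname{Im}a$ together with the triangle inequality would only produce $\|h\|\le2$. Recovering the sharp constant $1$ forces one to use the Hermitian character of $h_f$ through the phase rotation, which in turn is why the reduction to the scalar target $\Bbb C$ (rather than working directly in $B$, where the optimal phase would vary over $M_B$) is essential.
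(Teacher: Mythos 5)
Your argument is correct, but it is not the route the paper takes. The paper's proof of this corollary is a two-line citation: by the preceding main theorem (the result you call Theorem 2.5; the paper's proof miscites it as ``Theorem 2.3''), $h$ is a linear $3$-ring homomorphism, i.e.\ a $3$-homomorphism, and then Theorem 2.1 of the Bra\v{c}i\v{c}--Moslehian reference [1] --- an automatic contractivity result for involution-preserving $3$-homomorphisms between $C^*$-algebras --- gives $\|h\|\leq 1$ directly. You instead bootstrap from the commutative case already established in Corollary 2.3: transform the commutative target isometrically via Gelfand, reduce to the Hermitian functionals $h_f=f\circ h$, obtain the bound on self-adjoint elements by restricting to the commutative $C^*$-subalgebra $C^*(a)$, and recover the sharp constant on arbitrary elements by the phase-rotation trick for Hermitian functionals. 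This is sound, and all the small verifications you list (characters on a commutative $C^*$-algebra are $*$-preserving, the Gelfand transform is isometric, $\|\tfrac12(b+b^*)\|\leq\|b\|$) check out. Note, though, that your opening invocation of Theorem 2.5 is never actually used: the restriction of a $3$-Jordan homomorphism to $C^*(a)$ is automatically a $3$-Jordan homomorphism, so your argument bypasses the noncommutative-domain theorem entirely and relies only on Corollary 2.3. What the paper's route buys is brevity and a bound obtained at the level of $B$ without passing to characters; what yours buys is independence from Theorem 2.1 of [1] and an explicit explanation of why the constant is $1$ rather than the $2$ that the naive real/imaginary decomposition would give.
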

\begin{proof}
It follows from Theorem 2.3 above and Theorem 2.1 of [1].
\end{proof}

\begin{theorem}
Let $h:A\to  B$ be a bounded  involution preserving k-Jordan
homomorphism between  $C^*$-algebras such that $h(a^*a)=h(a)^*h(a)$
for all $a\in A$. Then $h$ is norm contractive ($\|h\|\leq 1$).
\end{theorem}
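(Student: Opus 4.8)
The plan is to exploit the $C^*$-identity together with the single algebraic hypothesis $h(a^*a)=h(a)^*h(a)$; the remaining structural assumptions (involution preservation and the $k$-Jordan property) serve only to pin down the class of maps under consideration and play no role in the norm estimate itself.

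First I would fix an arbitrary $a\in A$ and apply the $C^*$-identity in $B$ to the element $h(a)$, obtaining $\|h(a)\|^2=\|h(a)^*h(a)\|$. Invoking the hypothesis $h(a)^*h(a)=h(a^*a)$ then rewrites this as $\|h(a)\|^2=\|h(a^*a)\|$.

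Next I would bring in the boundedness of $h$. Since $\|h(a^*a)\|\leq\|h\|\,\|a^*a\|$ and the $C^*$-identity in $A$ gives $\|a^*a\|=\|a\|^2$, the previous identity yields the pointwise estimate $\|h(a)\|^2\leq\|h\|\,\|a\|^2$, that is, $\|h(a)\|\leq\|h\|^{1/2}\|a\|$ for every $a\in A$.

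Finally I would take the supremum over the unit ball of $A$, which produces the self-improving inequality $\|h\|\leq\|h\|^{1/2}$; this forces $\|h\|\leq 1$ at once, the case $\|h\|=0$ being trivial. There is no serious obstacle here: the one point worth flagging is the recognition that the hypothesis $h(a^*a)=h(a)^*h(a)$ is precisely what converts the $C^*$-norm of $h(a)$ into a quantity controlled linearly by $\|a^*a\|=\|a\|^2$, after which the bootstrap on $\|h\|$ is automatic.
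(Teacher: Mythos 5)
Your proof is correct, and it is genuinely simpler than the one in the paper. You observe that the single identity $\|h(a)\|^2=\|h(a)^*h(a)\|=\|h(a^*a)\|\le\|h\|\,\|a^*a\|=\|h\|\,\|a\|^2$ already yields $\|h\|\le\|h\|^{1/2}$ and hence $\|h\|\le 1$; as you note, this uses only boundedness, the $C^*$-identity in both algebras, and the hypothesis $h(a^*a)=h(a)^*h(a)$ --- the $k$-Jordan property and involution preservation are never invoked. The paper instead runs a longer bootstrap: starting from $\|h(a)\|^{4k+2}=\|(h(a)^*h(a))^{2k+1}\|$, it factors the middle power, applies the $C^*$-identity, and crucially uses the $k$-Jordan property to replace $h(a^*a)^{k}$ by $h((a^*a)^{k})$ before applying boundedness, arriving at $\|h(a)\|^{4k+2}\le\|h\|^{4}\|a\|^{4k+2}$ and hence $\|h\|\le\|h\|^{4/(4k+2)}\le 1$. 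Both arguments are self-improving estimates of the same flavor, but yours shows the theorem holds for any bounded linear map satisfying $h(a^*a)=h(a)^*h(a)$ between $C^*$-algebras, which is a strictly more general statement obtained with strictly less machinery; the paper's detour through the $(2k+1)$-st power buys nothing here.
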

\begin{proof}
By  using  Lemma 2.4 of [7], we have
\begin{align*}
\|h(a)\|^{4k+2}&=\|(h(a)^*h(a))^{2k+1}\|=\|(h(a)^*h(a))^{k}(h(a)^*h(a))(h(a)^*h(a))^{k}\|\\
&=\|[h(a)(h(a)^*h(a))^{k}]^*[h(a)(h(a)^*h(a))^{k}]\|\\
&=\|h(a)(h(a)^*h(a))^{k}\|^2 =\|h(a)(h(a^*a))^{k}\|^2\\
&=\|h(a)(h(a^*a)^{k})\|^2\leq \|h(a)\|^2\|h((a^*a)^k)\|^2\\
&\leq \|h\|^2\|a\|^2\|h\|^2\|(a^*a)^k\|^2\\
&\leq \|h\|^4\|a\|^{4k+2},
\end{align*}
for  all $a\in A.$ Which implies that $\|h\|\leq 1$ by taking
$4k+2-th$ roots.
\end{proof}

\end{document}